\newtheorem{theorem}{Theorem}[section]
\newtheorem{proposition}[theorem]{Proposition}
\theoremstyle{remark}
\newtheorem{remark}[theorem]{Remark}
\newtheorem{remarks}[theorem]{Remarks}
\def\R{{\mathbb R}}
\begin{document}
\title{Sylvester's Minorant Criterion, Lagrange-Beltrami Identity, and Nonnegative Definiteness}
\renewcommand{\rightmark}
{\textsc\large{SYLVESTER'S CRITERION AND LAGRANGE-BELTRAMI IDENTITY}}
\author{Sudhir R. Ghorpade}
\address{Department of Mathematics, % \newline  \indent
Indian Institute of Technology Bombay,\newline \indent
Powai, Mumbai 400076, India.}
\email{srg@math.iitb.ac.in}
\urladdr{http://www.math.iitb.ac.in/$\sim$srg/}

\author{Balmohan V. Limaye}
\address{Department of Mathematics, % \newline  \indent
Indian Institute of Technology Bombay,\newline \indent
Powai, Mumbai 400076, India.}
\email{bvl@math.iitb.ac.in}
\urladdr{http://www.math.iitb.ac.in/$\sim$bvl/}
\subjclass[2000]{15A57, 15A15}
\keywords{Positive definite, nonnegative definite, principal minor.} %quadratic form.}

\begin{abstract}
We consider the characterizations of positive definite as well as
nonnegative definite quadratic forms in terms of the principal
minors of the associated symmetric matrix. We briefly review some of
the known proofs, including a classical approach via the
Lagrange-Beltrami identity.  For quadratic forms in up to $3$
variables, we give an elementary and self-contained proof of
Sylvester's Criterion for positive definiteness as well as for
nonnegative definiteness. In the process, we obtain an explicit
version of Lagrange-Beltrami identity for ternary quadratic forms.
\end{abstract}

\maketitle

\section{Introduction}
\label{sec:intro}

Let $A=(a_{ij})$ be an $n\times n$ real symmetric matrix and
$$
Q(\mathbf{x})=Q(x_1, \dots, x_n):= \mathbf{x}A\mathbf{x}^T = \sum_{i=1}^n \sum_{j=1}^n a_{ij} \, x_i x_j \,
$$
be the corresponding (real) quadratic form in $n$ variables. Recall that the matrix $A$  or the form $Q$ is said to be \emph{positive definite} (resp: \emph{nonnegative definite}\footnote{Sometimes the term \emph{positive semi-definite} is used in place of \emph{nonnegative definite}. On the other hand, some books (e.g., \cite{CJ,Ho,Wi}) define a \emph{positive semi-definite} quadratic form as one which is nonnegative definite but not positive definite.}) if
$Q(\mathbf{x})>0$ (resp: $Q(\mathbf{x}) \ge 0$) for all $\mathbf{x}\in \R^n$, $\mathbf{x}\ne \mathbf{0}$.
%Also recall that
For any matrix, a \emph{minor} is the determinant of a square submatrix. %; it
A minor is called a \emph{principal minor\,} if the rows and columns chosen to form the submatrix have the same indices;
%If the rows and columns chosen to form the submatrix have the same %indices, then it is called a \emph{principal minor\,};
further, if these indices are consecutive and start from $1$, then it is called a \emph{leading principal minor}.
Sylvester's minorant criterion %, or simply Sylvester's Criterion,
is the well-known result that
\begin{equation}
A \mbox{ is positive definite} \Longleftrightarrow \mbox{the leading principal minors of $A$ are positive.}
\label{posdef}
\end{equation}
An analogous characterization for nonnegative definite matrices seems  relatively less well-known and conspicuous by its absence in most texts on Linear Algebra. It may be tempting to guess that $A$ is nonnegative definite if and only if all the leading principal minors of $A$ are nonnegative. In fact, some books (e.g.,   \cite[Ch. 2 \S 5]{BB} or  \cite[p. 133]{Wi}) appear to state incorrectly %may lead us to believe
that this is true.
%But it is not so, as can be seen easily by considering the matix
To see that nonnegativity of leading principal minors does not imply  nonnegative definiteness, it suffices to consider the matrix
$$
A = \left(\begin{array}{rr} 0 & \ 0  \\
0 & \ -1   \end{array} \right) \quad \mbox{ or the corresponding quadratic form } \quad  Q(x,y) = -y^2.
$$
In \cite[p.293]{Bo}, this example is given and the author also
states that for positive semi-definiteness, there is no straightforward generalization of Sylvester's minorant criterion! Nonetheless there is a simple and natural generalization  %which It is
as follows.
\begin{equation}
A \mbox{ is nonnegative definite} \Longleftrightarrow \mbox{the principal minors of $A$ are nonnegative.}
\label{nonnegdef}
\end{equation}
%While this is not a new result,
%complicated. Indeed, as remarked earlier, it does not seem easy to find even the statement.
The aim of this article is to effectuate a greater awareness of \eqref{nonnegdef} and a related algebraic fact known as
the Lagrange-Beltrami identity. The existing proofs in the literature
of \eqref{nonnegdef} as well as \eqref{posdef}  seem rather involved.
(See Remark \ref{rem:onpfs} and the paragraph before Proposition \ref{pro:bqf}.) With this in view, we shall outline a completely self-contained and elementary proof of \eqref{posdef} and \eqref{nonnegdef} %the two characterizations
when $n \le 3$. There is a good reason why such a proof may be useful and interesting. As is well-known, characterizations of positive definite matrices, when applied to the Hessian matrix, play a crucial role in the local analysis of real-valued functions of several real variables. %In particular,
For example, they give rise to the so called Discriminant Test, which is a useful criterion to determine a local extremum or a saddle point.  Characterizations of nonnegative definiteness are also useful here, and more importantly, in the study of convexity and concavity of functions of several variables. (See, for example, %\cite[Ch. 9]{Ap} and
\cite[\S 42]{varberg}.) Usually these topics are studied in Calculus courses before the students have an exposure to Linear Algebra and learn notions such as eigenvalues and results such as the Spectral Theorem. Also, it is common to restrict to functions of two or three variables. Thus it seems desirable to have a proof for $n\le 3$ that assumes only the definition of the determinant of a $2\times 2$ or $3\times 3$ matrix.
%The matrix considered being the Hessian matrix $\left( \frac{\partial^2}{\partial x_i \partial x_j} \right)$ of a function $f$ of the $n$ variables $(x_1, \dots , x_n)$.

As indicated earlier, the pursuit of an elementary proof leads one to the Lagrange-Beltrami identity, which is yet another gem from classical linear algebra that  deserves to be better known and better understood. In Section \ref{sec2} below, we explain this identity and illustrate its use in proving  \eqref{posdef} and \eqref{nonnegdef} in the simplest case  $n=2$. We also comment on some of the existing proofs of \eqref{posdef} and \eqref{nonnegdef} in the general case. Section \ref{sec3} deals with the case $n=3$, and ends with a  number of remarks and a related question.
%and contains the only nontrivial result proved in this article.

\section{Lagrange-Beltrami Identity and Binary Quadratic forms}
\label{sec2}

Let $A=(a_{ij})$ and $Q(\mathbf{x})$ %=Q(x_1, \dots, x_n)$
be as in the Introduction. For $1\le k\le n$, let
$$
\Delta_k  := \left| \begin{array}{ccc} a_{11} & \dots & a_{1k}  \\
\vdots & & \vdots \\
a_{k1} & \dots  & a_{kk}   \end{array} \right|
%\det_{1\le i, j\le k} \left(a_{ij}\right)
$$
be the $k$th leading principal minor of $A$. %Also,
Set $\Delta_0 := 1$.
%, by convention.
Evidently, a natural way to prove the implication `$\Leftarrow$' in \eqref{posdef} is to show that if $\Delta_k > 0$ for $1\le k\le n$, then $Q(x)$ is a sum of squares that vanishes only when $\mathbf{x} = \mathbf{0}$.
The Lagrange-Beltrami identity does just this. It states that if
the product $\Delta_1 \cdots \Delta_{n-1}$ %\ne 0$
is nonzero, then
\begin{equation}
%\mbox{ if $\Delta_k > 0$ for $1\le k\le n$, then
Q(\mathbf{x})=  \sum_{i=1}^n
\frac{\Delta_i}{\Delta_{i-1}} \, y_i^2, \quad \mbox{ where } \quad
y_i= x_i + \sum_{i<j\le n} b_{ij}x_j  \quad \mbox{for } i=1, \dots , n,
\label{LB}
\end{equation}
and each $b_{ij}$ is a rational function in the entries of $A$.
Notice that the equations for $y_1, \dots, y_n$ %the $y_i$'s
in terms of $x_1, \dots, x_n$ %the $x_i$'s
are in a triangular form; %and
hence if $\Delta_k > 0$ for $1\le k\le n$, then
$$
Q(\mathbf{x})= \mathbf{0} \Longleftrightarrow y_i =0 \mbox{ for } i=1, \dots , n
\Longleftrightarrow  x_i =0 \mbox{ for } i=1, \dots , n.
$$

To prove the other implication `$\Rightarrow$' in \eqref{posdef}, it is customary to use induction on $n$ together with the following fact.
\begin{equation}
A \mbox{ is positive definite } \Longrightarrow \det A > 0.
\label{posdet}
\end{equation}
This fact follows readily from the following eigenvalue characterization:
\begin{equation}
A \mbox{ is positive definite } \Longleftrightarrow \mbox{ the eigenvalues of $A$ are positive.}
\label{eigenpos}
\end{equation}
In turn, \eqref{eigenpos} follows from the Spectral Theorem for real symmetric matrices. In the case of nonnegative definiteness, we have a straightforward analogue of \eqref{eigenpos}, namely,
\begin{equation}
A \mbox{ is nonnegative definite } \Longleftrightarrow \mbox{ the eigenvalues of $A$ are nonnegative.}
\label{eigennonneg}
\end{equation}
This, too, follows from the Spectral Theorem. Also, as a consequence, we have an obvious analogue of \eqref{posdet} that together with induction on $n$ will prove the implication `$\Rightarrow$' in \eqref{nonnegdef}.
However, if one is seeking an elementary proof, one should try to avoid the use of the Spectral Theorem and its consequences such as \eqref{eigenpos} and \eqref{eigennonneg}. Also, if some $\Delta_i=0$, then to prove \eqref{nonnegdef},
%in the case of nonnegative definite matrices,
the Lagrange-Beltrami identity \eqref{LB} seems useless even if we clear the denominators. We will now see that at least for small values of $n$, the Lagrange-Beltrami identity is still useful if we know it explicitly and also if we know some of its {\it avatars}. Moreover, the use of Spectral Theorem can be avoided by suitable `substitution tricks'. Let us illustrate by considering the simplest case of binary quadratic forms, that is, the case of $n=2$.

\begin{proposition}
\label{pro:bqf}
Let $Q(x,y):=ax^2 + 2bxy+cy^2$ be a binary quadratic form in the variables $x$ and $y$ with coefficients $a,b,c$ in $\R$. Then
$$
Q(x,y) \mbox{ is nonnegative definite } \Longleftrightarrow a\ge 0, \ c\ge 0 \mbox{ and } ac - b^2 \ge 0.
$$
\end{proposition}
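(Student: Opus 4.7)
My plan is to establish both implications separately, splitting into the non-degenerate case $a>0$ (where the Lagrange--Beltrami identity \eqref{LB} applies directly) and the degenerate case $a=0$ (where the identity breaks down and a substitution trick is needed instead).

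For the forward implication, I will first obtain $a\ge 0$ and $c\ge 0$ by the obvious test substitutions $(x,y)=(1,0)$ and $(x,y)=(0,1)$. To derive $ac-b^2\ge 0$, I would argue as follows. If $a>0$, then specializing \eqref{LB} to $n=2$ (with $\Delta_1=a$, $\Delta_2=ac-b^2$) gives the explicit identity
\begin{equation*}
a\,Q(x,y)=(ax+by)^2+(ac-b^2)\,y^2.
\end{equation*}
Plugging in $y=1$, $x=-b/a$ makes the first square vanish, forcing $ac-b^2\ge 0$. If $a=0$, then $Q(x,y)=2bxy+cy^2$, and setting $y=1$ gives the linear function $2bx+c$ which is nonnegative for all $x\in\R$ only when $b=0$; in that case $ac-b^2=0\ge 0$ automatically.

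For the reverse implication, I again distinguish cases. If $a>0$, the same explicit Lagrange--Beltrami identity above exhibits $aQ(x,y)$ as a sum of two nonnegative terms (the assumption $ac-b^2\ge 0$ handles the coefficient of $y^2$), so $Q(x,y)\ge 0$. If $a=0$, the hypothesis $ac-b^2\ge 0$ reads $-b^2\ge 0$, forcing $b=0$; then $Q(x,y)=cy^2\ge 0$ because $c\ge 0$.

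The main (minor) obstacle is the degenerate case $a=0$: here the Lagrange--Beltrami identity is not applicable since its denominators vanish, so one must argue directly that $Q$ reduces to a single square term. One might worry about treating the case $c>0$, $a=0$ separately by symmetry, using the companion identity $cQ(x,y)=(bx+cy)^2+(ac-b^2)x^2$, but since the condition $ac-b^2\ge 0$ with $a=0$ immediately forces $b=0$, this symmetric analysis is not even needed. The entire argument uses only the definition of the $2\times 2$ determinant and elementary algebra, and in particular avoids appeals to the Spectral Theorem and to eigenvalue characterizations such as \eqref{eigenpos} and \eqref{eigennonneg}, in keeping with the stated philosophy of the paper.
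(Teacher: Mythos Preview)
Your proof is correct and follows essentially the same strategy as the paper's: the test substitutions $(1,0)$ and $(0,1)$, the explicit identity $aQ(x,y)=(ax+by)^2+(ac-b^2)y^2$, and a direct treatment of the degenerate case $a=0$. Your case split is in fact slightly more economical than the paper's, since you observe at once that $a=0$ together with $ac-b^2\ge 0$ forces $b=0$, whereas the paper separately invokes the companion identity $cQ(x,y)=(bx+cy)^2+(ac-b^2)x^2$ when $c>0$ before handling $a=c=0$.
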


\begin{proof}
Suppose $Q(x,y)$ is nonnegative definite. Then $a  = Q(1,0) \ge 0$ and
$c=Q(0,1)\ge 0$. In case $a\ne 0$, consider
$$
Q(b, -a) = ab^2 - 2ab^2 + ca^2 = ca^2 - ab^2 = a (ac-b^2).
$$
Since $Q(b, -a)\ge 0$ and $a > 0$, we must have $ac-b^2 \ge 0$. Next, in case $a=0$
and $c\ne 0$, consider
$$
Q(c,-b) = ac^2 - 2cb^2 + cb^2 = ac^2 - cb^2 = c(ac - b^2).
$$
Since $Q(c,-b)\ge 0$ and $c>0$, we must have $ac-b^2 \ge 0$. Finally, in case
$a=0$ and $c=0$, we have
$2b = Q(1,1) \ge 0$ and $-2b = Q(1,-1)\ge 0$, which implies that $b=0$; hence, in this
case $ac-b^2 = 0$.

Conversely, suppose $a\ge 0$, $c\ge 0$ and $ac-b^2 \ge 0$. Let $\varDelta:=ac - b^2$. In case $a > 0$, the identity
$$
aQ(x,y) = a^2 x^2 + 2ab xy + acy^2=(ax+by)^2+\varDelta y^2 %\quad \mbox{ for all } (x,y)\in \R^2
$$
%Since $\varDelta \ge 0$, we have $aQ(x,y) \ge 0$ and since $a\ge 0$, we have
implies that $Q(x,y)\ge 0$ for all $(x,y)\in \R^2$. In case $c>0$, the identity
$$
cQ(x,y) = ac x^2 + 2bc xy + c^2y^2=(bx+cy)^2+\varDelta x^2 %\quad \mbox{ for all } (x,y)\in \R^2
$$
%Since $\varDelta \ge 0$, we have $aQ(x,y) \ge 0$ and since $a\ge 0$, we have
implies that $Q(s,t)\ge 0$ for all $(s,t)\in \R^2$. In case $a=c=0$, the condition
$ac-b^2 \ge 0$ implies that $b=0$, and hence $Q(s,t) = 0$ for all $(s,t)\in \R^2$.
Thus, in any case, $Q(x,y)$ is nonnegative definite.
\end{proof}

It may be noted that for $n=2$, the above proposition not only yields %proves
\eqref{nonnegdef}  but the arguments in the proof readily
yield \eqref{posdef} as well. In fact, proving \eqref{posdef} is simpler because one has to consider fewer cases.

\begin{remark}
\label{rem:onpfs}
A proof of \eqref{posdef} using the Lagrange-Beltrami identity appears, for example, in \cite{BB, Be, Ho}. Other proofs, as can be found, for example, in \cite{Fr, Gi, HJ}, use an inductive argument based on \eqref{eigenpos} and something like the Interlacing Theorem \cite[Thm. 7.3.9]{HJ}
%(see, e.g., \cite[Thm. 7.3.9]{HJ} or \cite[Thm. 9.5.4]{RB})
or %perhaps, some
a version of the Courant-Fischer ``min-max theorem'' \cite[Thm. 4.2.11]{HJ}.
As we have noted already, a proof of \eqref{eigenpos}
%the eigenvalue characterization of positive definite quadratic forms
uses the Spectral Theorem.
The Lagrange-Beltrami identity can be proved by an inductive argument
using essentially the ideas of Gaussian elimination
(cf. \cite[Ch. 5, \S 2]{Be} or \cite[\S 9.17]{Ho}). % for details.)
As for the characterization \eqref{nonnegdef} of nonnegative definiteness,  one of the implications in \eqref{nonnegdef}  appears as an exercise in \cite[p. 405]{HJ}.
A complete statement together with some related characterizations and an outline of a proof can be found in \cite[\S 9.3]{St}, while \cite[Thm. 9.4.9]{RB} gives a more detailed proof. These proofs are not difficult except that they use all those `standard theorems' that one usually discusses toward the fag end of a serious course in Linear Algebra.
\end{remark}

\section{Ternary Quadratic Forms}
\label{sec3}

%In Proposition \ref{Prop:5.3} below we prove  in the case $n=3$ using
The substitution tricks and an explicit version of the Lagrange-Beltrami identity together with its {\it avatars} can be used in proving \eqref{nonnegdef} for ternary quadratic forms, that is, for $n=3$, as follows. In the statement below, we have avoided the use of subscripts for the entries of $A$ or the coefficients of $Q$.
%This result is stated and proved below in a completely self-contained manner\footnote{The price one has to pay for avoiding relatively advanced tools from Linear Algebra is a willingness to verify seemingly tedious identities. But this is what high-school algebra is all about! The  impatient ones can take recourse to their favorite Computer Algebra package.}.
The only thing used in the proof is the corresponding result for binary quadratic forms %, viz., %namely,
(Proposition \ref{pro:bqf}). %, which will be referred to as the `$n=2$ case'.
% and whose proof we have already outlined.

\begin{proposition}
\label{Prop:5.3}
Let $Q(x,y, z):= ax^2 +2bxy + 2pxz + cy^2 + 2qyz + rz^2$ be a ternary quadratic form in the variables $x$, $y$ and $z$ with
coefficients $a,b,c,p,q,r$ in $\R$. Let
$$
A:=\left(\begin{array}{lll} a & \ b & \ p \\ b & \ c & \ q \\
p & \ q & \ r   \end{array} \right) \quad {\rm and} \quad
\Delta  := \det A =  p(bq-cp) + q(bp - aq) + r (ac-b^2). %\Delta_2,
$$
%denote the determinant of the corresponding $3\times 3$ matrix.
Then $Q(x,y,z)$ is nonnegative definite if and only if all the principal minors of $A$ are nonnegative, i.e.,
$$
a\ge 0, \ c\ge 0, \ r\ge 0, \ ac-b^2\ge 0, \ cr-q^2\ge 0, \ ar-p^2\ge 0
%\Delta_1\ge 0, \ \Delta_2\ge 0
\mbox{ and } \Delta \ge 0.
$$
%\begin{eqnarray*}
%& & Q(x,y,z) \mbox{ is nonnegative definite } \\
%&\Longleftrightarrow &
%a\ge 0, \ c\ge 0, \ r\ge 0, \ ac-b^2\ge 0, \ cr-q^2\ge 0, \ ar-p^2\ge 0
%%\Delta_1\ge 0, \ \Delta_2\ge 0
%\mbox{ and } \Delta \ge 0.
%\end{eqnarray*}
\end{proposition}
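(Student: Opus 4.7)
The plan is to parallel the structure of Proposition \ref{pro:bqf}, proving the two implications separately and reducing to the binary case whenever possible; the new ingredients are a substitution trick based on the adjugate of $A$ (for the forward direction) and an explicit Lagrange-Beltrami identity in three variables (for the backward direction).

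For the forward direction, suppose $Q$ is nonnegative definite. Restricting $Q$ to the three coordinate planes gives the binary forms $Q(x,y,0)$, $Q(x,0,z)$, $Q(0,y,z)$, each nonnegative definite; Proposition \ref{pro:bqf} applied to these three already yields $a,c,r\ge 0$ together with $ac-b^2,\ cr-q^2,\ ar-p^2\ge 0$. The remaining nonnegativity $\Delta\ge 0$ is obtained by substituting into $Q$ a row of the adjugate of $A$. For instance, using $\operatorname{adj}(A)\cdot A = \Delta I$ applied to the third row of $\operatorname{adj}(A)$, a direct computation gives
\[
Q(bq-cp,\ bp-aq,\ ac-b^2) = \Delta\,(ac-b^2),
\]
and analogous substitutions using the first two rows of $\operatorname{adj}(A)$ produce $\Delta(cr-q^2)$ and $\Delta(ar-p^2)$. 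Hence, if at least one of the three $2\times 2$ principal minors is strictly positive, nonnegativity of $Q$ immediately forces $\Delta\ge 0$. If instead $ac-b^2=ar-p^2=cr-q^2=0$, then either one of $a,c,r$ vanishes (and $\Delta=0$ follows by inspection, since, e.g., $a=0$ together with the vanishing of $ac-b^2$ and $ar-p^2$ forces $b=p=0$), or $a,c,r>0$, in which case substituting $(x,y,z)=(-b,a,t)$ and using $ac=b^2$ gives
\[
Q(-b,a,t) = rt^2 + 2(aq-bp)t,
\]
which is nonnegative for all $t\in\R$ only when $aq=bp$; combined with $b^2=ac$, $p^2=ar$, $q^2=cr$, this in turn yields $\Delta=0$.

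For the backward direction, if any of $a,c,r$ is zero -- say $a=0$ -- then $ac-b^2\ge 0$ and $ar-p^2\ge 0$ force $b=p=0$, so $Q$ collapses to the binary form $cy^2+2qyz+rz^2$ in $(y,z)$, which is nonnegative definite by Proposition \ref{pro:bqf} (the required hypotheses $c,r\ge 0$ and $cr-q^2\ge 0$ are among our assumptions); the cases $c=0$ and $r=0$ are analogous. Otherwise $a,c,r>0$, and the relevant explicit avatar of the Lagrange-Beltrami identity is
\[
aQ(x,y,z) = (ax+by+pz)^2 + (ac-b^2)y^2 + 2(aq-bp)yz + (ar-p^2)z^2.
\]
The last three terms form a binary quadratic form $R(y,z)$ whose coefficients satisfy the identity $(ac-b^2)(ar-p^2)-(aq-bp)^2 = a\Delta\ge 0$; thus $R$ is nonnegative definite by Proposition \ref{pro:bqf}, so $aQ\ge 0$, and since $a>0$ we conclude $Q\ge 0$.

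The main obstacle I expect is the degenerate subcase of the forward direction in which all three $2\times 2$ principal minors vanish simultaneously: there the adjugate substitutions yield no information about $\Delta$, and the nonnegativity $\Delta\ge 0$ must be extracted via the ad-hoc substitution $(-b,a,t)$ together with a small discriminant argument. This step is what most visibly departs from the spirit of the binary proof, and verifying the algebraic identity $(ac-b^2)(ar-p^2)-(aq-bp)^2 = a\Delta$ cleanly is the other calculation whose bookkeeping deserves care.
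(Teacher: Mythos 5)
Your proof is correct. The forward implication is essentially the paper's own argument: the same restriction to the three coordinate planes, the same substitutions $Q(bq-cp,\,bp-aq,\,ac-b^2)=(ac-b^2)\Delta$ and its two companions (you organize them conceptually via $\mathrm{adj}(A)A=\Delta I$, which is a tidy way to see where they come from, whereas the paper simply records the identities and permutes the variables cyclically), and the same treatment of the degenerate case $ac-b^2=cr-q^2=ar-p^2=0$ via the substitution $(\mp b,\pm a,t)$; you replace the paper's limiting argument as $z\to 0^{\pm}$ by a discriminant argument, which is equally elementary, and your concluding step $aq=bp\Rightarrow bq=cp\Rightarrow\Delta=0$ is exactly the paper's. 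The converse is where you genuinely diverge, and your route is shorter: after a single completion of the square, $aQ(x,y,z)=(ax+by+pz)^2+R(y,z)$ with $R(y,z)=(ac-b^2)y^2+2(aq-bp)yz+(ar-p^2)z^2$, and the identity $(ac-b^2)(ar-p^2)-(aq-bp)^2=a\Delta$ (which is correct, and worth displaying with its two-line verification, since it is the crux) lets Proposition \ref{pro:bqf} absorb all the degenerate configurations at once. The paper instead distinguishes which of the three $2\times 2$ principal minors is strictly positive, invokes a different full two-step Lagrange--Beltrami avatar in each case, and needs a separate $bp=\pm aq$ argument when all three vanish. Note that your converse argument uses only $a>0$, so strictly you need to split off only the case $a=0$ (the cases $c=0$ and $r=0$ you treat separately are harmless but redundant). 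What the paper's longer route buys is the three explicit decompositions of $M_1M_2\,Q(x,y,z)$ into squares, which it exploits in Remarks \ref{Rem:5.4} for the statement about nested principal minors; what your route buys is a cleaner converse with fewer cases, at the cost of not producing those identities.
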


\begin{proof}
Suppose $Q(x,y,z)$ is nonnegative definite. Then the binary quadratic forms
$Q(x,y,0)$, $Q(x,0,z)$ and $Q(0,y,z)$ are % = ax^2 +2bxy + cy^2$ is clearly
nonnegative definite. Hence, by Proposition \ref{pro:bqf},
%Proposition \ref{pro:10.49},
each of $a,c,r, \, ac-b^2,\,  cr-q^2$ and $ar-p^2$ is nonnegative.
Further, we observe that
%the cofactors
$
Q(bq - cp,  \ bp - aq, \ ac - b^2) = (ac - b^2)\Delta.
$
Hence if $ac - b^2 \ne 0$, then $\Delta \ge 0$. By permuting the variables $x$, $y$ and $z$ cyclically, we obtain
%Similarly,
$Q(cr - q^2,  \ pq - br, \ bq - cp) = (cr - q^2)\Delta$
and
$Q(pq - br,  \ ar - p^2, \ bp - aq) = (ar - p^2)\Delta$.
Hence if $cr-q^2 \ne 0$ or if $ar-p^2 \ne 0$, then $\Delta \ge 0$.
Finally, suppose $ac-b^2 = cr - q^2 = ar - p^2 = 0$. Now, if $a=0$, then we must have
$b=p=0$. Similarly, if $c=0$, then $b=q=0$, while if $r=0$, then $p=q=0$. It
follows that if $acr=0$, then $\Delta =0$. Next, suppose $acr\ne 0$.
Then $bpq\ne 0$ since $b^2 =ac$, $p^2 = ar$ and $q^2 = cr$.
Consider $Q(b, -a, z) = 2z (bp-aq) + rz^2$.
Since $Q(x,y,z)$ is nonnegative definite, we see that
$2(bp-aq) + rz \ge 0$ if $z > 0$ and $2(bp-aq) + rz \le 0$ if $z < 0$.
Taking limit as  $z \to 0^+$ and also as $z \to 0^-$,
we see that $(bp-aq)\ge 0$ and also $(bp-aq)\le 0$.
%Upon letting $z \to 0^+$, we see that $(bp-aq)\ge 0$ and upon letting %$z \to 0^-$, we see that $(bp-aq)\le 0$.
Consequently, $bp-aq=0$, i.e., $aq=bp$. Hence $abq=b^2p = acp$, and therefore,
$bq = cp$. Thus $bq-cp=0$, $bp-aq=0$ and $ac-b^2=0$. It follows that $\Delta =0$.
%$$
%A_{23}:= bq - pc, \quad A_{13}:= bp - aq, \quad {\rm and} \quad A_{12}:= \Delta_2 = ac - b^2 .
%$$
%It can be easily verified that
%$
%Q(A_{23}, A_{13}, A_{12}) = \Delta_2\Delta_3
%$. Consequently, $\Delta_3\ge 0$. In case $\Delta_2 =0$, we consider the ternary quadratic
%form $Q'(h,y,z):= Q(h,y,z)$ whose leading principal minors are

Conversely, suppose each of  $a,c,r, \, ac-b^2,\,  ar-p^2, \, cr-q^2$ and $\Delta$ is nonnegative. In case $a=0$, then the inequalities $ac-b^2 \ge 0$ and $ar-p^2\ge 0$ imply
that $b=0$ and $p=0$. Thus, in this case, $Q(x,y,z) = cy^2+ 2qyz + rz^2$, and this
is nonnegative definite by Proposition \ref{pro:bqf}.
%Proposition \ref{pro:10.49}.
Similarly, if $c=0$, then $b=q=0$,
while if $r=0$, then $p=q=0$, and in either of these cases, $Q(x,y,z)$ is nonnegative definite by Proposition \ref{pro:bqf}.
%Proposition \ref{pro:10.49}.
Suppose $a>0$, $c>0$
and $r>0$. If
$ac-b^2 >0$, then the identity
$$
a (ac-b^2) Q(x,y,z) = (ac-b^2)(ax+by+pz)^2 + [(ac-b^2)y+(aq-bp)z]^2 + a\Delta z^2
$$
implies that $Q(x,y,z)$ is
nonnegative definite. Similarly, if $cr-q^2 >0$, then %the identity
$$
c(cr-q^2)Q(x,y,z) = (cr-q^2)(bx+cy+qz)^2 + [(cr-q^2)z + (cp-bq)x]^2 + c\Delta x^2
$$
implies that $Q(x,y,z)$ is nonnegative definite, whereas if $ar-p^2 >0$, then %the identity
$$
r(ar-p^2)Q(x,y,z) = (ar-p^2)(px+qy+r z)^2 + [(ar-p^2)z+(br-pq)y]^2 + r\Delta y^2
$$
implies that $Q(x,y,z)$ is nonnegative definite.   Finally, suppose $ac-b^2=ar-p^2=cr-q^2=0$. Then $bpq\ne 0$ because
$a$, $c$ and $r$ are positive. Moreover, $b^2p^2 = (ac)(ar) = a^2(cr) = a^2q^2$.
Hence $bp = \pm aq$. On the other hand, $\Delta = 2q(bp-aq)$, and so if $bp = -aq$, then $\Delta = -4a q^2 < 0$, which is a contradiction. It follows that $bp = aq$ and as a
consequence, $aQ(x,y,z) = (ax+by+pz)^2$, which implies that $Q(x,y,z)$ is
nonnegative definite.
\end{proof}

\begin{remarks}
\label{Rem:5.4}
(i) The above proof of Proposition \ref{Prop:5.3} can be easily adapted to prove \eqref{posdef} for $n=3$.
%the characterization of positive definite ternary quadratic forms.
%Namely, with notations as in Proposition \ref{Prop:5.3},
%$$
%Q(x,y,z) \mbox{ is positive definite } \Longleftrightarrow \mbox{$a$, $ac-b^2$ and $\Delta$ are positive}.
%$$
In fact, proving \eqref{posdef} %the proof of this
would be much simpler since one does not have to bother with
degenerate cases.
%Indeed, the proof of one of the implications here is essentially the  %same as that in \cite[Ch. 5, \S 2]{Be} or \cite[\S 9.17]{Ho}.

(ii) The identity
$
a (ac-b^2) Q(x,y,z)  = (ac-b^2)(ax+by+pz)^2 + [(ac-b^2)y+(aq-bp)z]^2 + a\Delta z^2 \
$
appearing in the proof of
%three identities used in the proof of the `converse part' of
Proposition \ref{Prop:5.3} may be
viewed as an explicit version of the Lagrange-Beltrami identity for $n=3$.
Indeed, when $a\ne 0$ and $ac-b^2 \ne 0$, it can be written as
$$
Q(x,y,z)  = \frac{a}{1} \left( x + \frac{b}{a} y + \frac{p}{a}z\right)^2 + \frac{ac-b^2}{a}\left(y + \frac{aq-bp}{ac-b^2} z\right)^2 + \frac{\Delta}{ac-b^2} z^2.
$$
The other two similar looking identities appearing in the
proof of Proposition \ref{Prop:5.3} may be viewed as distinct
{\it avatars} of the  Lagrange-Beltrami identity for $n=3$.
%%%%%%%%%% Older notation: \varDelta for M %%%%%%%%%%%%%%%%%%%%%%%%%%%%
%In all, there are $6$ such identities expressing $\varDelta_1\varDelta_2Q(x,y,z)$ as a linear combination of squares, where $(\varDelta_1, \varDelta_2)$ is any nested sequence of $1\times 1$ and $2\times 2$ principal minors of $A$.
%%%%%%%%%%%%%%%%%%%%%%%%%%%%%%%%%%%%%%%%%%%%%%%%%%%%%%%%%%%%%%%%%%%%%%%%
In all, there are $6$ such identities expressing $M_1M_2Q(x,y,z)$ as a linear combination of squares, where $(M_1, M_2)$ is any nested sequence of $1\times 1$ and $2\times 2$ principal minors of $A$.
These readily imply a generalization \cite[Thm. 7.2.5]{HJ} of \eqref{posdef} %an implication in Proposition \ref{Prop:5.1}
in the case $n=3$. Namely, the positivity of any nested sequence of leading principal minors %of $A$
implies positive definiteness. %of $Q$.

(iii) Changing $A$ to $-A$ in \eqref{posdef} and \eqref{nonnegdef}, we readily obtain characterizations of negative definite matrices as well as of nonpositive definite matrices. From the characterizations of nonnegative  definite quadratic forms and nonpositive definite quadratic  forms, we can deduce a characterization of indefinite quadratic forms, that is, of those (real) quadratic forms which take positive as well as negative values.  This can be quite useful in the study of saddle points.

(iv) For the sake of simplicity, and with a view toward applications to Calculus, we have restricted ourselves to real symmetric matrices. However, the results and proofs discussed in this article extend easily to complex hermitian matrices.

(v) It will be interesting to obtain an explicit version of the Lagrange-Beltrami identity for any $n>3$, and a self-contained `high-school algebra' proof to show the equivalence of nonnegative definiteness of any quadratic form and the nonnegativity of all the principal minors of the associated matrix.
\end{remarks}

\end{document}